\newcommand{\mk}{\mathfrak}
\newcommand{\mc}{\mathcal}
\newcommand{\mf}{\mathbf}
\newcommand{\mb}{\mathbb}
\newcommand{\mr}{\mathrm}
\newtheorem*{T*}{Theorem}
\newtheorem*{A*}{Proposition}
\newtheorem*{Cor*}{Corollary}
\newtheorem{T}{Theorem}[section]
\newtheorem{Le}[T]{Lemma}
\newtheorem{R}[T]{Remark}
\newtheorem{Cor}[T]{Corollary}
\newtheorem{A}[T]{Proposition}
\theoremstyle{definition}\newtheorem{D}[T]{Definition}
\newtheorem*{D*}{Definition}
\newtheorem{Ex}[T]{Example}
\def \ol {\overline}
\newcommand{\lin}{\text{lin}}
\newcommand{\en}{\enspace}
\def \ph {\varphi}
\def \L {\mathscr{L}}
\def \S {\mathscr{S}}
\def \ol {\overline}
\def \ph {\varphi}
\def \lin {\operatorname{lin}}
\title[Almost weak polynomial stability]{Almost weak polynomial stability of operators
}
\author[D. Kunszenti-Kov\'{a}cs]{D\'{a}vid Kunszenti-Kov\'{a}cs}
\address{Eotv\"os Lor\'and University, Institute of Mathematics and Numerical Analysis
and Large Networks Research Group, Hungarian Academy of Sciences, 1117 Budapest, P\'{a}zm\'{a}ny P. s\'{e}t\'{a}ny 1/C, Hungary.}
\email{daku@fa.uni-tuebingen.de}
\thanks{The author was supported by ERC Grant No. 227701}
\date\today
\keywords{Jacobs-deLeeuw-Glicksberg decomposition, almost weak stability, weak mixing, polynomial orbits, polynomial multiple ergodic averages}
\subjclass[2000]{Primary: 47A65, Secondary: 47A35}
\begin{document}

\begin{abstract}
We investigate whether almost weak stability of an operator $T$ on a Banach space $X$ implies its almost weak polynomial stability. We show, using a modified version of the van der Corput Lemma that if $X$ is a Hilbert space and $T$ a contraction, then the implication holds. On the other hand, based on a TDS arising from a two dimensional ODE, we give an explicit example of a contraction on a $C_0$ space that is almost weakly stable, but its appropriate polynomial powers fail to converge weakly to zero along a subsequence of density $1$.
Finally we provide an application to convergence of polynomial multiple ergodic averages.
\end{abstract}

\maketitle

\section{Introduction}

Considering convergence along subsequences is very common in the ergodic theoretical setting (see e.g. \cite{BLRT} and references therein). Polynomial subsequences are of special interest since they arise in a natural way through group extensions (cf. Furstenberg \cite[Chapter 3]{Furst}).
In this paper we study the asymptotics of polynomial subsequences of orbits of contractions on Hilbert spaces, and its consequences for various ergodic theorems.
To avoid ambiguity, we write $\mb{N}_0$ for the set of nonnegative integers, and $\mb{N}^+$ for the set of positive integers.
We denote by $\mathcal{P}\subset\mb{Z}\left[X\right]$ 
the set of all polynomials mapping $\mb{N}^+$ to $\mb{N}_0$,
and by $\mathcal{P}_0\subset\mb{Z}\left[X\right]$ 
the set of all polynomials mapping $\mb{N}_0$ to $\mb{N}_0$ with $p(0)=0$. Further $\Gamma\subset\mb{C}$ denotes the unit circle.

We shall need the following notion to understand the convergence types used in this paper.

\begin{D*}
The \emph{density} of a monotone sequence $\{n_k\}_{k\in\mb{N}^+}\subset\mb{N}^+$ is 
\[
\lim_{n\to\infty}\frac{\left|\left\{k\in \mb{N}^+\left|n_k\leq n\right.\right\}\right|}{n},
\]
whenever the above limit exists.
\end{D*}

With the help of the above definition, we can define the notion of almost weak stability (cf. weak mixing in Zsid\'o \cite{ZS}).

\begin{D*}
A sequence $\{x_n\}_{n\in\mb{N}^+}$ in a Banach space $X$ is called \emph{almost weakly stable} if there exists a sequence $\{n_k\}_{k\in\mb{N}^+}$ with density 1 such that
\[
\mr{weak}\lim_{k\to\infty} x_{n_k}=0.
\]
Let $T$ be a bounded operator on $X$. A vector $x\in X$ is then called \emph{almost weakly stable with respect to} $T$ if its orbit $\{T^n x\}_{n\in\mb{N}^+}$ is almost weakly stable.
Finally the operator $T$ itself is called \emph{almost weakly stable} if every vector $x\in X$ is almost weakly stable with respect to $T$.
\end{D*}

By comparison recall that \emph{weakly stability} requires weak convergence along the whole sequence, not only along one with density $1$, i.e., we have the following definition.

\begin{D*} A sequence $\{x_n\}_{n\in\mb{N}^+}$ in a Banach space $X$ is called \emph{weakly stable} if
\[
\mr{weak}\lim_{n\to\infty} x_{n}=0.
\]
Correspondingly, if $T$ is a bounded operator on $X$, then a vector $x\in X$ is called \emph{weakly stable with respect to} $T$ if its orbit $\{T^n x\}_{n\in\mb{N}^+}$ is weakly stable, while the operator $T$ itself is called \emph{weakly stable} if every vector $x\in X$ is weakly stable with respect to $T$.
\end{D*}

We first take a look at splitting theorems on Hilbert spaces, and show that almost weak stability for contractions also implies almost weak stability of polynomial subsequences of orbits.

The main result of this paper is the following theorem.

\begin{T}\label{T:uaws_pol}
Let $T$ be an almost weakly stable contraction on a Hilbert space $H$. Then $T$ is almost weakly polynomial stable, i.e., for any $h\in H$ and non-constant polynomial $p\in\mc{P}$ the sequence $\{T^{p(j)}h\}_{j=1}^\infty$ is almost weakly stable.
\end{T}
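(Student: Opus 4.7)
My plan is to proceed by induction on $d := \deg p$, powered by a modified van der Corput lemma (the one indicated in the abstract), which I take in the following density-$1$ form: if $\{x_j\}$ is bounded in a Hilbert space and $\langle x_{j+k}, x_j\rangle \to 0$ along a subsequence of density $1$ for every $k \in \mb N^+$, then $\{x_j\}$ is almost weakly stable. Throughout, set $x_j := T^{p(j)} h$.

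Base case, $d = 1$: Here $p(j) = aj + b$, so $x_j = T^b (T^a)^j h$, and since the bounded operator $T^b$ preserves almost weak stability it suffices to show $T^a$ is almost weakly stable. By the Jacobs--deLeeuw--Glicksberg splitting on a Hilbert space, almost weak stability of a contraction is equivalent to the absence of unimodular eigenvalues. A unimodular eigenvalue $\lambda$ of $T^a$ with eigenvector $x$ would render the $T$-invariant finite-dimensional subspace $\lin\{x, Tx,\dots, T^{a-1}x\}$ an eigenspace for $T^a$ with eigenvalue $\lambda$, hence any eigenvalue $\mu$ of $T$ restricted to it satisfies $\mu^a = \lambda$ and $|\mu| = 1$ (because $T$ is a contraction on this finite-dimensional space), producing a unimodular eigenvalue of $T$ --- a contradiction.

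Inductive step, $d \geq 2$, assuming the theorem for all non-constant polynomials of degree $< d$: By the modified van der Corput lemma it suffices to check, for each $k \in \mb N^+$, that $\langle x_{j+k}, x_j\rangle \to 0$ along a density-$1$ set. For large $j$ one has $p(j+k) > p(j)$, so
\[
\langle T^{p(j+k)} h,\ T^{p(j)} h \rangle = \langle T^{q_k(j)} h,\ T^{*p(j)} T^{p(j)} h \rangle,
\]
where $q_k(j) := p(j+k) - p(j)$ is non-constant of degree $d - 1$ and, after an index shift, lies in $\mc P$. The positive contractions $B_n := T^{*n} T^n$ are monotone decreasing, since $B_{n+1} = (T^*)^n (T^*T) T^n \leq B_n$ because $T^*T \leq I$; hence they converge strongly to some positive contraction $A$. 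Consequently $T^{*p(j)} T^{p(j)} h \to A h$ in norm, and
\[
\langle x_{j+k}, x_j \rangle = \langle T^{q_k(j)} h,\ A h \rangle + o(1).
\]
By the inductive hypothesis, $\{T^{q_k(j)} h\}$ is almost weakly stable, so $\langle T^{q_k(j)} h, A h\rangle \to 0$ along density $1$; the vanishing error does not spoil this, and the modified van der Corput lemma closes the induction.

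The main obstacle is twofold: calibrating the modified van der Corput lemma (the classical version only yields weak Cesàro convergence of $\{x_j\}$, whereas almost weak stability is the stronger density-$1$ notion), and then accommodating the contraction (rather than isometry) hypothesis on $T$. The strong-convergence trick for $T^{*p(j)} T^{p(j)}$ is precisely what reduces the contraction setting to the inductively manageable quantity $\langle T^{q_k(j)} h, A h\rangle$; for an isometry one would simply have $T^{*n} T^n = I$ and the reduction would be immediate.
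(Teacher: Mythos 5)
Your proof is correct, and its skeleton—induction on $\deg p$ driven by a van der Corput lemma calibrated to almost weak stability—matches the paper's; indeed, your density-$1$ form of the lemma is precisely the paper's Lemma \ref{Le:weak} combined with the Koopman--von Neumann equivalence of Lemma \ref{lemma:K-vN}. Where you genuinely diverge is in handling the contraction hypothesis. The paper first invokes the Sz.-Nagy--Foia\c{s}--Langer--Foguel decomposition $H=H_u\oplus H_0$ (Proposition \ref{T:spl_unit}): on $H_0$ the operator and its adjoint are weakly stable, so that part is trivially almost weakly polynomial stable, and on $H_u$ the operator is unitary, whence $\langle T^{p(j+k)}h, T^{p(j)}h\rangle=\langle h, T^{p(j+k)-p(j)}h\rangle$ exactly and the induction closes at once. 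You instead stay with the contraction throughout and absorb the defect via the monotone strong limit $A=\lim_{n\to\infty}T^{*n}T^n$ (correct: $T^{*(n+1)}T^{n+1}=T^{*n}(T^*T)T^n\leq T^{*n}T^n$, and decreasing positive operators converge strongly), reducing the cross term to $\langle T^{q_k(j)}h, Ah\rangle+o(1)$; this buys you independence from Foguel's theorem on the completely non-unitary part, at the price of the (standard) asymptotic-limit argument, and it is arguably more self-contained. Your base case also differs: the paper simply intersects the density-$1$ stability sequence with the arithmetic progression $\{an+b\}$ of density $1/a$, while you use the spectral characterization of Proposition \ref{prop:JGdL_Hilbert} (almost weak stability $\Leftrightarrow$ no unimodular eigenvalues) and show $T^a$ inherits it via the finite-dimensional $T$-invariant subspace $\lin\{x,Tx,\ldots,T^{a-1}x\}$ on which $T^a=\lambda I$; both work, and in fact $|\mu|=1$ already follows from $\mu^a=\lambda$ without appealing to contractivity. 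One cosmetic slip: for $p\in\mc{P}$ the constant term $b$ may be negative (e.g.\ $p(X)=2X-1$), so $T^b$ need not make sense; write $x_j=T^{p(1)}(T^a)^{j-1}h$ instead of $T^b(T^a)^jh$—the index shift you use elsewhere fixes this immediately.
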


 This is not true for general contractions, as shown by an example.
Thereafter we apply the obtained results to the setting of entangled and multiple polynomial ergodic averages.

%%%%%%%%%%%%%%%%
%
%
%
%
%%%%%%%%%%%%%%%%

\section{Almost weak polynomial stability on Hilbert spaces}\label{Sect:awps}

We start with the classical splitting theorem due to K. Jacobs (see \cite{jacobs57}) characterising the orthogonal complement of the subspace of almost periodic vectors of a semigroup of contractions.

\begin{T}\label{T:spl_jacobs}
Let $H$ be a Hilbert space and $\S\subset \L(H)$ a semigroup of contractions, and let $\ol{\S}$ denote its closure in the weak operator topology.
Then $H$ can be decomposed into $\S$-invariant subspaces as
\begin{equation*}
H = H _{ \mathrm{r} } \oplus  H _{ \mathrm{s} },
\end{equation*}
where $H _{ \mathrm{r} }$ is the space of $\ol{\S}$-reversible elements, i.e.,
\begin{equation*}
H _{ \mathrm{r} }  =
	\{x\in H \colon \forall S\in\ol{\S} \,\exists\, T\in\ol{\S} \text{ such that } TSx=x\}
\end{equation*}
and $H _{ \mathrm{s} }$ consists of the orbits for which $0$ is a weak accumulation point,
\begin{equation*}
H _{ \mathrm{s} }  =
	 \left\{ x \in H \colon 0 \in
	 		 \overline{\{ S x \colon S \in   \S \}}  ^ {\sigma ( H , H ^{ * } ) }  \right\}.
\end{equation*}
\end{T}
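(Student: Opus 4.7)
The plan is to realize the splitting as the range--kernel decomposition of a distinguished idempotent $P\in\overline{\S}$ supplied by the Jacobs--de Leeuw--Glicksberg (JdLG) machinery.

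First, since the elements of $\S$ are contractions, $\overline{\S}$ sits in the closed unit ball of $\L(H)$, which is WOT-compact by Banach--Alaoglu; hence $\overline{\S}$ is itself WOT-compact. Because multiplication is separately WOT-continuous, $\overline{\S}$ is a compact semitopological semigroup. Invoking the Ellis--Numakura lemma together with the structure theory of the minimal two-sided ideal $K\subset\overline{\S}$ (which is completely simple and decomposes into maximal subgroups) produces a distinguished idempotent $P\in K$ with two crucial features, labelled (i) and (ii): (i) for every $S\in\overline{\S}$ there exists $T\in\overline{\S}$ with $TSP = P$, and (ii) $P$ commutes with every element of $\overline{\S}$, which already forces $PH$ and $\ker P$ to be $\overline{\S}$-invariant.

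Next, I would observe that $P$, being a contractive idempotent on a Hilbert space, is automatically an orthogonal projection: for $x\in\operatorname{range}P$ and $y\in\ker P$, the inequality $\|x\|^2 = \|P(x+\lambda y)\|^2\leq \|x+\lambda y\|^2$ valid for every $\lambda\in\mb{C}$ forces $\langle x, y\rangle = 0$. Hence $H = PH\oplus\ker P$ is an orthogonal, $\overline{\S}$-invariant decomposition, and it remains to identify $PH = H_{\mr r}$ and $\ker P = H_{\mr s}$. For $PH\subset H_{\mr r}$: given $x = Px$ and $S\in\overline{\S}$, property (i) furnishes $T$ with $TSP = P$, so $TSx = TSPx = Px = x$. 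The reverse inclusion $H_{\mr r}\subset PH$ is where the contraction hypothesis is used critically: applying the reversibility condition with $S = P$ produces $T\in\overline{\S}$ with $TPx = x$, and the inequalities $\|x\| = \|TPx\|\leq\|Px\|\leq\|x\|$ give $\|Px\| = \|x\|$, which for an orthogonal projection forces $Px = x$. For $\ker P\subset H_{\mr s}$: if $Px = 0$ and $S_\alpha\in\S$ tends to $P$ in WOT, then $S_\alpha x \to 0$ weakly. For $H_{\mr s}\subset\ker P$: given a net $S_\alpha\in\S$ with $S_\alpha x\to 0$ weakly, extract a WOT-cluster point $Q\in\overline{\S}$, so $Qx = 0$; applying (i) to $Q$ yields $T$ with $TQP = P$, and then (ii) gives $Px = TQPx = TPQx = TP\cdot 0 = 0$.

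The main obstacle is the semigroup-theoretic step that produces $P$ with properties (i) and (ii). This relies on nontrivial structure theory of compact semitopological semigroups (Ellis--Numakura lemma and the analysis of the minimal ideal) rather than elementary operator manipulations, and it is precisely here that the contraction hypothesis is used essentially, via the WOT-compactness of $\overline{\S}$. Once $P$ is in hand, the remaining work is short and elementary, with the key Hilbert space ingredient being the passage from $\|Px\| = \|x\|$ to $Px = x$ that fails outside the Hilbert setting.
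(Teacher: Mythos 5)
Your outline follows the standard Jacobs--de~Leeuw--Glicksberg route; note that the paper itself offers no proof of Theorem~\ref{T:spl_jacobs}, quoting it as classical from \cite{jacobs57}, so the benchmark is that classical argument, which is indeed the one you are reconstructing. Everything downstream of the idempotent $P$ is correct and complete: WOT-compactness of $\ol{\S}$, separate continuity of multiplication, the observation that a contractive idempotent on a Hilbert space is an orthogonal projection, the identifications $PH=H_{\mr{r}}$ (via property~(i) and the norm-equality trick $\|x\|=\|TPx\|\leq\|Px\|\leq\|x\|$) and $\ker P=H_{\mr{s}}$ (via approximating nets, cluster points, and centrality), and the resulting $\ol{\S}$-invariance of both summands.

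The genuine gap is exactly the step you flag but then dismiss too quickly: producing $P$ with properties (i) and (ii). The Ellis--Numakura lemma and the completely simple structure of the kernel $K(\ol{\S})$ hold for \emph{every} compact semitopological semigroup --- in particular for the WOT-closure of any merely bounded operator semigroup --- and in that generality the kernel typically contains many idempotents, none of them central, so (ii) fails and the structure theory you invoke cannot by itself deliver $P$. Concretely, the two-element left-zero semigroup $\{e,f\}$ with $ef=e$, $fe=f$, realized by the bounded idempotents $e=\left(\begin{smallmatrix}1&0\\0&0\end{smallmatrix}\right)$, $f=\left(\begin{smallmatrix}1&0\\1&0\end{smallmatrix}\right)$ on $\mb{C}^2$, is its own WOT-compact closure and is completely simple with no central idempotent; for this semigroup the set of reversible vectors is the union of two lines, not a subspace, so the splitting itself fails. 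This also shows your diagnosis --- that contractivity enters ``via the WOT-compactness of $\ol{\S}$'' --- is mistaken: boundedness already gives compactness. What must actually be proved, and is the heart of Jacobs' theorem in the non-commutative case, is that $K(\ol{\S})$ is a \emph{group}; then (i) follows by taking $T$ to be the group inverse of $SP$ in $K$, and (ii) follows since $Ps=(Ps)P=P(sP)=sP$, $P$ being the identity of the ideal $K$. The missing lemma is where the Hilbert-contraction hypothesis does its essential work, through the same self-adjointness you use elsewhere: if $e,f$ are idempotents in a common minimal left ideal $L$, then $L=\ol{\S}e=\ol{\S}f$ makes each a right identity for $L$, so $ef=e$ and $fe=f$; taking adjoints of orthogonal projections in $ef=e$ gives $fe=e$, hence $e=f$. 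Dually, idempotents sharing a minimal right ideal coincide. By the Rees--Suschkewitsch structure of $K$, any two idempotents $e,f\in K$ are linked through the identity of the maximal subgroup $e\ol{\S}\cap\ol{\S}f$, which must then equal both $e$ and $f$; so $K$ has a unique idempotent and, being completely simple, is a group. With this lemma inserted, your proof is complete.
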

These subspaces are usually also referred to as the \textit{reversible} and \textit{stable} subspaces of the dynamical system corresponding to the semigroup $\S$. This theorem has later been generalised quite extensively, and stronger characterizations of both the reversible and the stable part have been obtained in the monothetic case, i.e. when $\S$ is generated by a single operator.
The following proposition is a special case of, e.g., Krengel \cite[Section 2.2.4]{Krengel} or Eisner \cite[Theorem II.4.8]{eisner-book}.

\begin{A}\label{prop:JGdL_Hilbert}
Let $T$ be a contraction on a Hilbert space $H$, and consider the semigroup $\{T^n|n\in\mb{N}^+\}$. Then the above spaces can be characterised as
\[
H_{\mr{s}}=\left\{g\in H\left|\lim_{j\to\infty} T^{n_j}g=0 \mbox{ weakly for some sequence }\{n_j\}_{j=1}^\infty\mbox{ with density }1\right.\right\},
\]
and
\[
H_{\mr{r}}=\overline\lin\left\{h\in H \left|\exists \lambda\in\Gamma\mbox{ such that } Th=\lambda h\right.\right\}.
\]
\end{A}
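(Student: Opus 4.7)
My plan is to apply Theorem~\ref{T:spl_jacobs} with $\S=\{T^n:n\in\mb{N}^+\}$, which furnishes the abstract splitting $H=H_{\mr r}\oplus H_{\mr s}$, and then to identify each summand with the concrete description stated in the proposition.

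For the reversible part $H_{\mr r}$, the inclusion $\supseteq$ is immediate: a unimodular eigenvector $h$ with $Th=\lambda h$, $|\lambda|=1$, satisfies $\lambda^{-k}T^k h=h$ for every $k\in\mb{N}^+$, so $h$ is $\ol\S$-reversible. For the reverse, I would show that $G:=\ol\S|_{H_{\mr r}}$, the restriction of the WOT-closure to the invariant subspace $H_{\mr r}$, is a compact abelian topological group of unitaries on $H_{\mr r}$. The reversibility condition combined with abelianness of $\S$ and the WOT-compactness of $\ol\S$ upgrades the semigroup structure to a genuine topological group. Then the spectral theorem for unitary representations of a compact abelian group (equivalently Peter--Weyl in the abelian case) decomposes $H_{\mr r}$ orthogonally as $\bigoplus_\chi H_\chi$, indexed by the characters $\chi$ of $G$ that appear; each $H_\chi$ consists of eigenvectors of $T$ with unimodular eigenvalue $\chi(T)$, giving the desired description.

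For the stable part $H_{\mr s}$, the inclusion $\supseteq$ is trivial, since density-$1$ weak convergence to $0$ in particular places $0$ in the weak closure of the orbit. For the reverse, I would fix $g\in H_{\mr s}$ and, by restricting to the closed $T$-invariant subspace generated by $g$, assume $H$ is separable. The analytic core is the Cesàro estimate
\[
\frac{1}{N}\sum_{n=1}^N\bigl|\langle T^n g, y\rangle\bigr|^2\to 0\quad\text{for every }y\in H,
\]
obtained by applying the mean ergodic theorem to $T\otimes\ol T$ on $H\otimes\ol H$: the fixed subspace of $T\otimes\ol T$ is spanned by tensors of unimodular eigenvectors of $T$ sharing an eigenvalue, and by the first part the orthogonality $g\perp H_{\mr r}$ forces $g\otimes\ol y$ to have no component there. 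The Koopman--von Neumann lemma then produces, for each $y$, a density-$1$ set $A_y\subset\mb{N}^+$ along which $\langle T^n g, y\rangle\to 0$. A standard diagonal argument over a countable dense subset $\{y_k\}\subset H$ (defining $A$ to agree with $A_1\cap\cdots\cap A_k$ on suitably chosen blocks $[N_k,N_{k+1})$) yields a single density-$1$ set $A$ along which $T^n g\to 0$ weakly.

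The main obstacle is the Cesàro estimate: upgrading the qualitative Jacobs description ``$0$ is a weak cluster point of the orbit'' to a quantitative bound on Cesàro averages of $|\langle T^n g, y\rangle|^2$ is precisely where the Hilbert-space structure is exploited, through the mean ergodic theorem on the tensor product (equivalently through the spectral theory of the minimal unitary dilation). Once this estimate is in place, the passage to a density-$1$ subsequence via Koopman--von Neumann together with the diagonalization step is routine.
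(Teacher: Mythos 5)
Your proof is correct, and it is essentially the argument behind the sources the paper itself relies on: the paper gives no proof of this proposition, quoting it as a special case of Krengel [Section 2.2.4] and Eisner [Theorem II.4.8], and your route---the compact-group (Peter--Weyl) structure of the WOT-closure on the reversible part, and Koopman--von Neumann plus a tensor-product Ces\`aro estimate with a density-one diagonalization on the stable part---is exactly the standard one found there. Two small repairs: the mean ergodic theorem should be applied to the orbit of $g\otimes\ol{g}$ tested against $y\otimes\ol{y}$ (your ``$g\otimes\ol{y}$'' is a slip), and your description of $\Fix(T\otimes\ol{T})$ for a general contraction needs a preliminary reduction to the unitary part via Proposition \ref{T:spl_unit}, since on the completely non-unitary part all orbits of $T\otimes\ol{T}$ are weakly null and so contribute no fixed vectors, after which the unitary-case lemma you invoke is classical.
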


A second decomposition theorem was first proven by B. Sz\H{o}kefalvi-Nagy and C. Foia\c{s} \cite{Nagy} and H. Langer \cite{Langer} independently,  and then extended by S. Foguel \cite[Theorem 1.1]{Foguel}.

\begin{A}\label{T:spl_unit}
Let $T$ be a contraction on a Hilbert space $H$. Then $H$ has a unique orthogonal decomposition $H=H_u\oplus H_0$ into $T$-invariant subspaces such that $T$ acts as a unitary operator on $H_u$, and its restriction to $H_0$ is completely non-unitary. In addition, these two subspaces satisfy
\[
H_u=\left\{h\in H \left| \|h\|=\|T^nh\|=\|T^{*n}h\|\,\forall n\in\mb{N}^+\right.\right\}
\]
and
\[
\mr{weak}\lim_{n\to\infty}T^ng=\mr{weak}\lim_{n\to\infty}T^{*n}g=0
\]
for each $g\in H_0$.
\end{A}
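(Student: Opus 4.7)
The plan is to define
\[
H_u := \{h \in H : \|T^n h\| = \|T^{*n} h\| = \|h\| \text{ for every } n \in \mb{N}^+\}
\]
and set $H_0 := H_u^\perp$. First I would verify $H_u$ is a closed linear subspace. The key identity, valid for any contraction, is that $\|Th\| = \|h\|$ if and only if $(I - T^*T) h = 0$, because $I - T^*T \ge 0$ and $\langle (I - T^*T) h, h\rangle = \|h\|^2 - \|Th\|^2$. Iterating this and its $T^*$-analogue yields
\[
H_u = \bigcap_{n \ge 1} \ker(I - T^{*n} T^n) \;\cap\; \bigcap_{n \ge 1} \ker(I - T^n T^{*n}),
\]
which is an intersection of kernels of bounded operators, hence closed and linear.

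Next I would check $T$- and $T^*$-invariance of $H_u$. For $h \in H_u$ the identities $T^*Th = h$ and $TT^*h = h$ let one telescope $T^{*k}(Th) = T^{*(k-1)}h$ and $T^k(T^*h) = T^{k-1}h$, collapsing all the relevant norms to $\|h\|$. Thus $T|_{H_u}$ and $T^*|_{H_u}$ are mutually inverse isometries on $H_u$, so $T$ acts unitarily there; taking orthogonal complements then yields that $H_0$ is also $T$- and $T^*$-invariant, so the decomposition is reducing. Complete non-unitarity on $H_0$ is then immediate: any closed reducing subspace $K \subseteq H_0$ on which $T$ is unitary satisfies the defining conditions of $H_u$, so $K \subseteq H_u \cap H_0 = \{0\}$. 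Uniqueness follows from this intrinsic characterisation of $H_u$ as the maximal reducing subspace on which $T$ is unitary.

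The main obstacle is the weak-convergence assertion: for every $g \in H_0$ one needs $T^n g \weak 0$ and $T^{*n} g \weak 0$. Norm decay alone is insufficient, since the unilateral shift is an isometric yet weakly stable completely non-unitary contraction, so I would appeal to the Sz.-Nagy minimal isometric dilation: extend $T$ to an isometry $V$ on $K \supseteq H$ with $T^n = P_H V^n|_H$, apply the Wold decomposition $V = V_s \oplus V_u$ into a unilateral shift and a unitary, and identify the pullback to $H$ of the unitary summand with $H_u$. On the shift summand, $V_s^n \weak 0$ (since $V_s^{*n} \to 0$ strongly), and projecting back to $H$ gives $T^n g \weak 0$ for all $g \in H_0$; the $T^*$-statement is obtained symmetrically by dilating $T^*$.
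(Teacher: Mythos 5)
Your construction and verification of the decomposition itself are correct and essentially the standard argument: realising $H_u$ as $\bigcap_{n\ge1}\ker(I-T^{*n}T^n)\cap\bigcap_{n\ge1}\ker(I-T^nT^{*n})$, the telescoping via $T^*Th=h$ and $TT^*h=h$, reduction, unitarity, complete non-unitarity of the complement and uniqueness by maximality all go through. Note that the paper itself gives no proof of this proposition: it is quoted from Sz.-Nagy--Foia\c{s} \cite{Nagy}, Langer \cite{Langer} and Foguel \cite{Foguel}, and the genuinely hard content is precisely the last clause, the weak stability of $T$ and $T^*$ on $H_0$, which is Foguel's theorem. That is exactly where your sketch breaks down.

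The gap is the claimed identification of the unitary Wold summand of the minimal isometric dilation with (the pullback to $H$ of) $H_u$. This is false. Take $T=S^*$, the backward shift on $\ell^2(\mb{N}_0)$: here $\|T^nh\|\to0$ for every $h$, so $H_u=\{0\}$ and $H_0=H$; yet the minimal isometric dilation of $T$ is the bilateral shift on $\ell^2(\mb{Z})$, which is already unitary -- its shift summand $K_s$ is trivial and its residual (unitary) part meets $H$ in all of $H$. So for $g\in H_0$ the Wold components can satisfy $g_s=0$, $g_u=g$, and ``projecting back'' from the shift summand proves nothing: one is left having to show $\langle V_u^n g_u, f\rangle\to0$ for $f\in H$, which is exactly the assertion to be proved (and is false for general unitaries -- the residual part of the dilation is governed by the asymptotics of $T^*$, not by $H_u$, as the example shows). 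The standard dilation-theoretic repair is of a different order of difficulty than the rest of your sketch: restrict to $T|_{H_0}$, which is completely non-unitary, pass to its minimal \emph{unitary} dilation $U$, invoke the Sz.-Nagy--Foia\c{s} theorem that the minimal unitary dilation of a completely non-unitary contraction has absolutely continuous spectral measure, and conclude $\langle T^ng,f\rangle=\langle U^ng,f\rangle\to0$ by the Riemann--Lebesgue lemma; the $T^*$ statement follows since $T^*|_{H_0}$ is completely non-unitary as well. Alternatively one cites Foguel's direct argument \cite{Foguel}, as the paper in effect does. As written, your final step asserts the conclusion rather than deriving it.
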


Combining both results we obtain the following splitting theorem for Hilbert space contractions.

\begin{Cor}\label{T:splitting_auld}
Let $T$ be a contraction on a Hilbert space $H$.
Then there is a unique orthogonal decomposition $H=H_\mr{r}\oplus H_{\mr{us}}\oplus H_{0}$ into three $T$-invariant subspaces such that
\begin{itemize}
\item $H _{ \mathrm{r} }  =
	\overline\lin\left\{h\in H \left|\exists \lambda\in\Gamma\mbox{ such that } Th=\lambda h\right.\right\}$,
\item $T|_{H_{\mr{us}}}$ is unitary and 
 each $g\in H_{\mr{us}}$ is almost weakly stable,
\item $T|_{H_0}$ is completely non-unitary, and $T$ and $T^*$ are both weakly stable on $H_0$.
\end{itemize}
\end{Cor}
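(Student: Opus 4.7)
My plan is to obtain the decomposition by applying the two preceding splitting results in sequence. First, I invoke Proposition~\ref{T:spl_unit} to split $H = H_u \oplus H_0$ orthogonally into the unitary and completely non-unitary parts. The third bullet of the corollary is then exactly the content of the second half of Proposition~\ref{T:spl_unit}, so $H_0$ already plays its required role.

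Next, since $T|_{H_u}$ is unitary (in particular a contraction), I apply Theorem~\ref{T:spl_jacobs} together with Proposition~\ref{prop:JGdL_Hilbert} inside $H_u$ to obtain a splitting $H_u = H_u^\mr{r} \oplus H_u^\mr{s}$, where $H_u^\mr{r}$ is the closed linear span of the unimodular eigenvectors of $T|_{H_u}$ and every vector in $H_u^\mr{s}$ is almost weakly stable. Because distinct unimodular eigenspaces of a Hilbert space contraction are mutually orthogonal, this is in fact an orthogonal decomposition; I then set $H_\mr{us} := H_u^\mr{s}$.

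The remaining identification is $H_u^\mr{r} = H_\mr{r}$, for which I would verify that every unimodular eigenvector of $T$ on the whole $H$ already lies in $H_u$. Indeed, if $Th = \lambda h$ with $|\lambda|=1$, then $\|h\|^2 = \|Th\|^2 = \langle T^*Th,h\rangle$, which combined with $I - T^*T \ge 0$ forces $T^*Th = h$; this yields $T^*h = \bar\lambda h$, and hence $\|T^n h\| = \|T^{*n}h\| = \|h\|$ for every $n$, placing $h$ in $H_u$ by the characterisation in Proposition~\ref{T:spl_unit}. Consequently the closed linear span of unimodular eigenvectors of $T$ coincides with that of $T|_{H_u}$, giving $H_u^\mr{r} = H_\mr{r}$. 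The same computation shows that $H_\mr{r}$ is invariant under both $T$ and $T^*$, so $H_\mr{us} = H_u \ominus H_\mr{r}$ is reducing inside $H_u$ and $T|_{H_\mr{us}}$ remains unitary.

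Uniqueness is then automatic: the decomposition $H = H_u \oplus H_0$ is unique by Proposition~\ref{T:spl_unit}, and the subspaces $H_\mr{r}$ and $H_\mr{us}$ within $H_u$ are intrinsically characterised by the formulas in Proposition~\ref{prop:JGdL_Hilbert}. The only step with any real content is the verification $H_\mr{r} \subset H_u$ via the adjoint eigenvalue computation; everything else is bookkeeping.
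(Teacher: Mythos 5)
Your proposal is correct and takes essentially the same route as the paper, which offers no separate proof but simply asserts the corollary by ``combining'' Theorem~\ref{T:spl_jacobs} and Proposition~\ref{prop:JGdL_Hilbert} with Proposition~\ref{T:spl_unit}; your argument is exactly that combination spelled out, with the adjoint-eigenvalue computation $T^{*}h=\overline{\lambda}h$ supplying the compatibility check $H_{\mr{r}}\subset H_u$ that the paper leaves implicit. One small remark: it is this same computation (giving $\langle T^{n}g,h\rangle=\lambda^{n}\langle g,h\rangle$ for a unimodular eigenvector $h$), rather than the mutual orthogonality of distinct eigenspaces you invoke, that actually yields $H_{\mr{s}}\perp H_{\mr{r}}$ --- though since you work inside $H_u$, where the Jacobs--de Leeuw--Glicksberg splitting of the cited Hilbert-space results is already orthogonal, nothing essential is missing.
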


Our aim is now to strengthen the characterization of the unitary almost weakly stable part by investigating weak convergence along polynomial sequences. To do so, we introduce the following notion.
\begin{D}
A sequence $\{x_n\}_{n=1}^\infty\subset X$ is \emph{almost weakly polynomial stable} if for any non-constant polynomial $p\in\mc{P}$ there exists a sequence $\{n_j\}_{j=1}^\infty\subset\mb{N}$ with density 1 such that
\[
\mr{weak} \lim_{j\to\infty} x_{p(n_j)}=0.
\]
\end{D}

Van der Corput type inequalities have been used in the context of weakly mixing dynamical systems as a key tool in inductive proofs. They are however also useful when wanting to pass from asymptotics along linear sequences to polynomial sequences. The following Lemma (cf. first statement of Niculescu, Str\"oh, Zsid\'o \cite[Thm. 7.1]{niculescu/stroh/zsido} for an even more general statement) is a stronger version of the one used by Bergelson \cite[Theorem 1.5]{Bergelson1987}, since the assumptions on the sequence $\{h_n\}_{n=1}^\infty$ are weaker.

\begin{Le}[van der Corput]\label{vdC}

 Let $\{h_n\}_{n=1}^\infty$ be a sequence in a Hilbert space $H$ with $\|h_n\|\leq1$. For $j\in\mb{N}^+$ let further
\[
 \gamma_j:=\limsup_{N\to\infty}\left|\frac{1}{N}\sum_{n=1}^N\langle h_n,h_{n+j}\rangle \right|.
\]
Then $\lim_{N\to\infty}\frac{1}{N}\sum_{n=1}^{N}\gamma_n=0$ implies $\lim_{N\to\infty}\frac{1}{N}\sum_{n=1}^{N}h_n=0$.
\end{Le}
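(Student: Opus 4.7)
The plan is to follow a van der Corput-style smoothing argument, introducing a second averaging parameter $J$ that will later be sent to infinity, independently of $N$.

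First, I would fix a large integer $J$ and define the ``smoothed'' vectors
\[
u_n := \frac{1}{J}\sum_{j=0}^{J-1} h_{n+j}, \qquad n\in\mb{N}^+.
\]
A direct telescoping/reindexing of the double sum, combined with the bound $\|h_n\|\leq 1$, shows that
\[
\frac{1}{N}\sum_{n=1}^N h_n - \frac{1}{N}\sum_{n=1}^N u_n
\]
has norm $O(J/N)$, so the two Cesàro averages have the same asymptotic behaviour for fixed $J$. Hence it suffices to estimate the norm of $\frac{1}{N}\sum_{n=1}^N u_n$. By Cauchy--Schwarz (or Jensen) in the Hilbert space,
\[
\Bigl\|\frac{1}{N}\sum_{n=1}^N u_n\Bigr\|^2 \leq \frac{1}{N}\sum_{n=1}^N \|u_n\|^2.
\]

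Next, I would expand $\|u_n\|^2 = J^{-2}\sum_{i,j=0}^{J-1}\langle h_{n+i},h_{n+j}\rangle$ and swap the order of summation:
\[
\frac{1}{N}\sum_{n=1}^N \|u_n\|^2 = \frac{1}{J^2}\sum_{i,j=0}^{J-1}\frac{1}{N}\sum_{n=1}^N \langle h_{n+i},h_{n+j}\rangle.
\]
Each inner sum, after an index shift that costs another $O(J/N)$ error, is essentially $\frac{1}{N}\sum_{m=1}^N\langle h_m,h_{m+|i-j|}\rangle$. Taking the $\limsup_{N\to\infty}$ and using the definition of $\gamma_k$ yields
\[
\limsup_{N\to\infty}\frac{1}{N}\sum_{n=1}^N\|u_n\|^2 \leq \frac{1}{J^2}\sum_{i,j=0}^{J-1}\gamma_{|i-j|}.
\]
Counting the pairs $(i,j)$ with $|i-j|=k$ (there are $J$ for $k=0$ and $2(J-k)$ for $1\leq k\leq J-1$, and using $\gamma_0\leq 1$), this bound is at most
\[
\frac{1}{J} + \frac{2}{J}\cdot\frac{1}{J}\sum_{k=1}^{J-1}(J-k)\gamma_k \leq \frac{1}{J} + \frac{2}{J}\sum_{k=1}^{J-1}\gamma_k.
\]

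The hypothesis $\frac{1}{N}\sum_{n=1}^N\gamma_n\to 0$ means precisely that the right-hand side tends to $0$ as $J\to\infty$. Combining everything, for each $\varepsilon>0$ I can pick $J$ so large that the final bound is below $\varepsilon$, and then let $N\to\infty$ to kill the two $O(J/N)$ error terms, concluding that $\limsup_{N\to\infty}\|\frac{1}{N}\sum_{n=1}^N h_n\|^2\leq\varepsilon$. Since $\varepsilon$ was arbitrary, the claimed limit vanishes in norm (hence also weakly).

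The main conceptual step is to use the \emph{Cesàro} hypothesis on $\{\gamma_n\}$ rather than $\gamma_n\to 0$: this is exactly what the averaging over $i,j\in\{0,\ldots,J-1\}$ delivers, once we switch to the smoothed vectors $u_n$. The main bookkeeping obstacle is controlling the $O(J/N)$ boundary errors, which requires taking the limits in the correct order (first $N\to\infty$ for fixed $J$, then $J\to\infty$).
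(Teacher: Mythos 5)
Your proof is correct. Note that the paper itself gives no proof of Lemma \ref{vdC} but instead defers to Niculescu--Str\"oh--Zsid\'o \cite[Thm.\ 7.1]{niculescu/stroh/zsido}, and your argument is precisely the classical van der Corput smoothing proof found there and in Bergelson \cite{Bergelson1987}: replace $h_n$ by the $J$-smoothed averages $u_n$ at an $O(J/N)$ cost, apply Cauchy--Schwarz, expand into correlation sums, and let $N\to\infty$ before $J\to\infty$ so that the Ces\`aro hypothesis on $\{\gamma_k\}$ kills the bound $\frac{1}{J}+\frac{2}{J}\sum_{k=1}^{J-1}\gamma_k$. The only detail worth making explicit is that for $i>j$ one has $\langle h_{n+i},h_{n+j}\rangle=\overline{\langle h_{n+j},h_{n+i}\rangle}$, so its Ces\`aro limsup in absolute value is still bounded by $\gamma_{i-j}$, which your use of $\gamma_{|i-j|}$ implicitly assumes and which is harmless since the modulus sits outside the sum in the definition of $\gamma_k$.
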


This lemma yields norm stability of Ces\`aro means, whilst the characterizations in Theorem \ref{T:splitting_auld} are related to weak convergence. We therefore turn to another variant of the van der Corput lemma, using a stronger assumption to obtain almost weak stability.

The key observation is the following result linking Ces\`aro convergence to $0$ of a positive sequence in $\mb{R}$ to almost weak convergence to $0$ of the same sequence.
\begin{Le}[Koopman--von Neumann]\label{lemma:K-vN}
For a bounded sequence $\{y_n\}_{n=1}^\infty\subset[0,\infty)$ the following assertions are equivalent.
\begin{enumerate}[(a)]
\item $\displaystyle\lim_{n\to\infty}\frac{1}{n}\sum_{k=1}^n y_k=0$.
\item There exists a subsequence $\{n_j\}_{j=1}^\infty$ of $\mb{N}$ with density $1$ such that $\lim_{j\to\infty}y_{n_j}=0$.
\end{enumerate}
\end{Le}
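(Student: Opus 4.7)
The lemma is the classical Koopman--von Neumann equivalence; I plan a standard bidirectional argument. For (b) $\Rightarrow$ (a), fix $\varepsilon > 0$ and choose $J$ with $y_{n_j} < \varepsilon$ for all $j > J$. Let $M$ be a bound on $\{y_n\}$. The part of the Ces\`aro mean indexed by $\{n_j\}$ is then at most $\varepsilon + JM/N$, while the complementary part is at most $M\,|\{k \leq N : k \notin \{n_j\}\}|/N$, which vanishes as $N \to \infty$ because $\{n_j\}$ has density $1$. Letting $\varepsilon \to 0$ yields (a).

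The content lies in (a) $\Rightarrow$ (b). The starting observation is that for every $m \in \mb{N}^+$ the super-level set $E_m := \{n \in \mb{N}^+ : y_n \geq 1/m\}$ has density $0$: Markov's inequality gives
\[
\frac{|E_m \cap [1, N]|}{N} \leq \frac{m}{N}\sum_{k=1}^N y_k \xrightarrow{N\to\infty} 0,
\]
so for each $m$ there exists $N_m$ with $|E_m \cap [1, N]|/N < 1/m$ for all $N \geq N_m$. The task is then to amalgamate this countable family of density-$0$ sets into a single density-$0$ exceptional set outside of which $y_n \to 0$, and this is where the difficulty lies.

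The amalgamation is a standard diagonal construction. I choose the $N_m$ strictly increasing and growing so rapidly that $\sum_{k=1}^{m-1} N_{k+1}/k = o(N_m)$ as $m \to \infty$ (e.g., $N_{m+1} \geq m N_m$ with a mild head start suffices), and define
\[
E := \bigcup_{m=1}^\infty \bigl(E_m \cap [N_m, N_{m+1})\bigr).
\]
For $N \in [N_M, N_{M+1})$ one estimates
\[
\frac{|E \cap [1, N]|}{N} \leq \frac{1}{N}\sum_{k=1}^{M-1}\frac{N_{k+1}}{k} + \frac{|E_M \cap [1, N]|}{N} \leq \frac{1}{N}\sum_{k=1}^{M-1}\frac{N_{k+1}}{k} + \frac{1}{M},
\]
which tends to $0$ by the growth condition, so $E$ has density $0$. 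For $n \notin E$ with $N_M \leq n < N_{M+1}$ one has $n \notin E_M$, hence $y_n < 1/M$; enumerating $\mb{N}^+ \setminus E$ in increasing order therefore produces the desired density-$1$ subsequence along which $y_n \to 0$. The main obstacle is the bookkeeping needed to guarantee genuine (rather than merely upper) density $1$, and this is exactly what the growth condition on $N_m$ is designed to handle.
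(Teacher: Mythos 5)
Your proof is correct, and it is essentially the classical argument the paper itself invokes: the paper gives no proof here, deferring to Petersen \cite[p.~65]{petersen:1983}, whose proof is exactly your scheme of showing each super-level set $E_m$ has density zero via Markov's inequality and then splicing the blocks $E_m\cap[N_m,N_{m+1})$ into a single density-zero exceptional set (the converse direction being the routine splitting of the Ces\`aro mean). Your bookkeeping, including the growth condition on the $N_m$ and the bound $|E\cap[1,N]|/N \leq \frac{1}{N}\sum_{k=1}^{M-1}N_{k+1}/k + 1/M$, checks out.
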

\noindent We refer to e.g.~Petersen \cite[p.~65]{petersen:1983} for
the proof.

 Note that the following version of the van der Corput lemma is similar to the one used by Furstenberg (\cite[Lemma 4.9]{Furst}), but the condition on the sequence $\{h_n\}_{n=1}^\infty$ is here again weaker. This lemma follows from the second statement of Theorem $7.1$ of Niculescu, Str\"oh, Zsid\'o \cite{niculescu/stroh/zsido}, where the proof may also be found.

\begin{Le}[van der Corput for almost weak stability]\label{Le:weak}
 Let $\{h_n\}_{n=1}^\infty$ be a sequence in a Hilbert space $H$ with $\|h_n\|\leq1$ for all $n\in\mb{N}^+$. For each $j\in\mb{N}^+$ let further
\[
 \widetilde{\gamma}_j:=\limsup_{N\to\infty}\frac{1}{N}\sum_{n=1}^N\left|\langle h_n,h_{n+j}\rangle \right|.
\]
Then $\lim_{N\to\infty}\frac{1}{N}\sum_{n=1}^{N}\widetilde{\gamma}_n=0$ implies that $\{h_n\}_{n=1}^\infty$ is almost weakly stable.
\end{Le}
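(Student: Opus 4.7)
The plan is to reduce Lemma \ref{Le:weak} to the classical van der Corput Lemma \ref{vdC} together with the Koopman--von Neumann Lemma \ref{lemma:K-vN}. Almost weak stability of $\{h_n\}$ means the existence of a single density-$1$ set $A\subset \mb{N}^+$ along which $\langle h_n,f\rangle\to 0$ for every $f\in H$. By Lemma \ref{lemma:K-vN} applied to the bounded nonnegative sequence $y_n=|\langle h_n,f\rangle|$, such a density-$1$ set (depending on $f$) exists as soon as
\[
\lim_{N\to\infty}\frac{1}{N}\sum_{n=1}^N |\langle h_n,f\rangle|=0.
\]
So the first task is to deduce this Ces\`aro-type estimate from the hypothesis $\frac{1}{N}\sum_{n=1}^N \widetilde\gamma_n\to 0$.

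To do this, fix $f\in H$ with $\|f\|\leq 1$ and introduce the rotated sequence $g_n:=\overline{\mr{sgn}\,\langle h_n,f\rangle}\cdot h_n$, where $\mr{sgn}(0):=0$. Then $\|g_n\|\leq 1$, and by construction $\langle g_n,f\rangle=|\langle h_n,f\rangle|\geq 0$. Moreover, since the scalars $\epsilon_n:=\overline{\mr{sgn}\,\langle h_n,f\rangle}$ have modulus at most $1$, one obtains
\[
|\langle g_n,g_{n+j}\rangle|=|\overline{\epsilon_n}\epsilon_{n+j}\langle h_n,h_{n+j}\rangle|\leq |\langle h_n,h_{n+j}\rangle|,
\]
so the quantities $\gamma^g_j:=\limsup_N \bigl|\frac{1}{N}\sum_{n=1}^N\langle g_n,g_{n+j}\rangle\bigr|$ attached to $\{g_n\}$ satisfy $\gamma^g_j\leq \widetilde\gamma_j$. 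The hypothesis therefore gives $\frac{1}{N}\sum_{n=1}^N \gamma^g_n\to 0$, and the classical Lemma \ref{vdC} applied to $\{g_n\}$ yields $\frac{1}{N}\sum_{n=1}^N g_n\to 0$ in norm. Pairing with $f$ gives $\frac{1}{N}\sum_{n=1}^N |\langle h_n,f\rangle|\to 0$, as required.

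It remains to produce one density-$1$ set $A$ that works simultaneously for all $f\in H$. Without loss of generality we may replace $H$ by the separable closed subspace $\overline{\lin}\{h_n : n\in\mb{N}^+\}$, and it then suffices to test weak convergence against a fixed countable dense subset $\{f_k\}_{k=1}^\infty$. For each $k$ the previous step combined with Lemma \ref{lemma:K-vN} provides a density-$1$ set $A_k$ with $\langle h_n,f_k\rangle\to 0$ along $A_k$. A standard diagonal construction (which is essentially the quantitative content of Koopman--von Neumann) then merges these into a single density-$1$ set $A$ along which $\langle h_n,f_k\rangle\to 0$ for every $k$, and uniform boundedness of $\{h_n\}$ transfers this to every $f$ in $H$.

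I expect the main obstacle to be technical rather than conceptual: it is the merging step in the last paragraph. One must choose an increasing sequence $N_1<N_2<\cdots$ of thresholds such that for $n\geq N_k$ the density in $[1,n]$ of each of the bad sets $[1,n]\setminus A_1,\dots,[1,n]\setminus A_k$ is simultaneously small, and then assemble $A$ accordingly. Once this bookkeeping is done carefully, the rest of the argument is the direct chain \emph{hypothesis} $\Rightarrow$ \emph{classical vdC on twisted sequence} $\Rightarrow$ \emph{Koopman--von Neumann}, and no further input is needed.
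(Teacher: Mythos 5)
Your argument is correct, but it takes a genuinely different route from the paper, for the simple reason that the paper does not prove Lemma \ref{Le:weak} at all: it quotes the second statement of Theorem 7.1 of Niculescu, Str\"oh and Zsid\'o \cite{niculescu/stroh/zsido} and refers the reader there. You instead derive Lemma \ref{Le:weak} from the first statement (the paper's Lemma \ref{vdC}) combined with the Koopman--von Neumann Lemma \ref{lemma:K-vN}. The heart of your reduction, the twisted sequence $g_n:=\overline{\mathrm{sgn}\,\langle h_n,f\rangle}\,h_n$, is sound: since the factors $\epsilon_n$ satisfy $|\epsilon_n|\leq 1$, one has $\bigl|\tfrac{1}{N}\sum_{n=1}^N\langle g_n,g_{n+j}\rangle\bigr|\leq\tfrac{1}{N}\sum_{n=1}^N\left|\langle h_n,h_{n+j}\rangle\right|$, hence $\gamma^g_j\leq\widetilde{\gamma}_j$, and Lemma \ref{vdC} gives $\tfrac{1}{N}\sum_{n=1}^N|\langle h_n,f\rangle|=\langle\tfrac{1}{N}\sum_{n=1}^N g_n,f\rangle\to 0$; this is precisely where the stronger hypothesis of Lemma \ref{Le:weak} (absolute values \emph{inside} the Ces\`aro average) is indispensable, since the quantities $\gamma_j$ of Lemma \ref{vdC} are not stable under such unimodular twists --- so your proof in effect explains why the weak-stability version needs the stronger assumption. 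The remaining steps are also fine: restricting to $\overline{\lin}\{h_n : n\in\mb{N}^+\}$ is harmless because any $f\in H$ pairs with the $h_n$ only through its orthogonal projection onto that separable subspace, and merging the countably many density-$1$ sets $A_k$ into one is the standard construction (pick thresholds $N_1<N_2<\cdots$ so that $A_1\cap\cdots\cap A_k$ has relative density at least $1-2^{-k}$ in $[1,n]$ for all $n\geq N_k$, and set $A=\bigcup_k (A_1\cap\cdots\cap A_k)\cap[N_k,N_{k+1})$); the paper itself invokes exactly this device in Remark \ref{R:separable}, with references to Petersen \cite[Remark 2.6.3]{petersen:1983} and \cite[Lemma 9.1]{niculescu/stroh/zsido}. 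What your route buys is a self-contained deduction of the ``almost weak'' van der Corput lemma from the ``norm Ces\`aro'' one plus Koopman--von Neumann, at the cost of the (routine) density bookkeeping; the paper's route buys brevity by outsourcing the proof entirely to \cite{niculescu/stroh/zsido}.
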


\begin{R}
Note that although the assumptions in Lemma \ref{Le:weak} imply the ones in Lemma \ref{vdC}, there is no direct implication between their conclusions, as norm convergence of Ces\`aro means and almost weak stability are two independent properties.
\end{R}

We can now prove the following characterization of almost weakly stable operators on Hilbert spaces.
The idea is to obtain results for a given sequence by passing to the difference sequence and applying an induction argument, starting from the linear case.

\begin{A*}[Theorem \ref{T:uaws_pol}]
Let $T$ be an almost weakly stable contraction on a Hilbert space $H$. Then $T$ is almost weakly polynomial stable, i.e., for any $h\in H$ and non-constant polynomial $p\in\mc{P}$ the sequence $\{T^{p(j)}h\}_{j=1}^\infty$ is almost weakly stable.
\end{A*}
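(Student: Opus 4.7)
My plan is to first reduce to the case where $T$ is unitary via Corollary \ref{T:splitting_auld}, and then to induct on the degree $d$ of $p$ using Lemma \ref{Le:weak} as the key step. For the reduction I would apply Corollary \ref{T:splitting_auld} to write $H = H_\mr{r} \oplus H_\mr{us} \oplus H_0$. Almost weak stability of $T$ rules out any eigenvector with unimodular eigenvalue (such an $h$ would satisfy $|\langle T^n h, h\rangle| \equiv \|h\|^2$, precluding a density-$1$ weak limit), so $H_\mr{r} = \{0\}$. On $H_0$, $T$ is weakly stable, and since any non-constant $p \in \mathcal{P}$ has positive leading coefficient and hence $p(j) \to \infty$, the sequence $\{T^{p(j)}h\}$ already converges weakly to zero along the full sequence when $h \in H_0$. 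It therefore suffices to handle $h \in H_\mr{us}$, so I may assume $T$ acts as a unitary operator on $H$.

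To run the induction on $d$, I would fix $h \neq 0$ and set $h_n := T^{p(n)}h/\|h\|$; by unitarity $\|h_n\| = 1$. Using $T^{-1} = T^*$,
\[
\langle h_n, h_{n+j}\rangle = \frac{1}{\|h\|^2}\bigl\langle h, T^{q_j(n)} h\bigr\rangle, \qquad q_j(n) := p(n+j)-p(n),
\]
where $q_j$ is a polynomial in $n$ of degree $d-1$ with positive leading coefficient for each $j\geq 1$. The inductive goal is to verify $\frac{1}{N}\sum_{j=1}^N \tilde\gamma_j \to 0$ and then apply Lemma \ref{Le:weak}. In the base case $d = 1$, $q_j \equiv aj$ is constant in $n$, so $\tilde\gamma_j = |\langle h, T^{aj}h\rangle|/\|h\|^2$; almost weak stability of $T$ combined with Lemma \ref{lemma:K-vN} gives $\frac{1}{N}\sum_{n=1}^N |\langle h, T^n h\rangle| \to 0$, and the elementary bound $\frac{1}{N}\sum_{j=1}^N|\langle h, T^{aj}h\rangle| \leq \frac{a}{aN}\sum_{n=1}^{aN}|\langle h, T^n h\rangle|$ delivers $\frac{1}{N}\sum_j \tilde\gamma_j \to 0$. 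For $d \geq 2$, the induction hypothesis applied to the polynomial $q_j$ (which lies in $\mathcal{P}$ after a harmless shift of $n$) yields almost weak stability of $\{T^{q_j(n)}h\}_n$; testing against $h$ and applying Lemma \ref{lemma:K-vN} again shows $\tilde\gamma_j = 0$ for every $j \geq 1$, which trivially verifies the van der Corput hypothesis and lets Lemma \ref{Le:weak} close the induction.

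The heart of the argument is the identity $\langle T^{p(n)}h, T^{p(n+j)}h\rangle = \langle h, T^{p(n+j)-p(n)}h\rangle$, which converts the asymptotic inner-product data along a degree-$d$ polynomial into the corresponding data along one of degree $d-1$ — precisely the degree drop that van der Corput (Lemma \ref{Le:weak}) needs to fuel the induction. The main obstacle is that this identity requires unitarity of $T$, which is exactly why the preliminary splitting into $H_\mr{r}$, $H_\mr{us}$, $H_0$ is essential; the minor technicality that $q_j$ may take negative values for small $n$ is dispatched by an initial index shift.
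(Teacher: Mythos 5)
Your proposal is correct and follows essentially the same route as the paper: reduce to the unitary part via the canonical splitting, then induct on $\deg p$ using the difference identity $\langle T^{p(n)}h, T^{p(n+j)}h\rangle = \langle h, T^{p(n+j)-p(n)}h\rangle$ together with the Koopman--von Neumann lemma and the van der Corput lemma for almost weak stability. The only (harmless) deviations are cosmetic: the paper settles the base case $\deg p = 1$ directly by observing that the arithmetic progression $an+b$ meets a density-$1$ set in a set of density $1$, rather than running it through van der Corput with your scaling bound, and it invokes the Sz.-Nagy--Foia\c{s}--Langer decomposition directly instead of noting $H_{\mathrm{r}}=\{0\}$.
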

\begin{proof}
By Theorem \ref{T:spl_unit}, $H$ can be split into an orthogonal sum $H_u\oplus H_0$ of $T$-invariant subspaces such that $T|_{H_u}$ is unitary, whilst $T|_{H_0}$ is weakly stable. The latter part of $T$ is then a fortiori almost weakly polynomial stable. Thus it remains to be shown that this also holds for the unitary part of $T$.
Let therefore  $T$ be an almost weakly stable unitary operator, and take $h\in H$. We shall proceed by induction on the degree of the polynomial $p$.

If $\deg p=1$, then $p$ is of the form $aX+b$. The affine sequence $(an+b)_{n\in\mb{N}^+}$  in $\mb{N}^+$ has positive density $1/a$, and thus for any almost weakly stable operator $T$ the sequence $\left(T^{an+b}h\right)_{n\in\mb{N}^+}$ is almost weakly stable.
Suppose now that for each polynomial $q\in\mc{P}$ with $1\leq\deg q\leq d$ the sequence $\left(T^{q(n)}h\right)_{n\in\mb{N}^+}$ is almost weakly stable. Take $p\in\mc{P}$ with degree $d+1$.
Since $p\in\mc{P}$ is non-constant, there exists $n_0\in\mb{N}^+$ such that $p$ is strictly monotone increasing on $[n_0,\infty)$. Consider the sequence $(h_k)_{k\in\mb{N}^+}\subset H$ defined by
\[
h_k:= T^{p(n_0+k)}h.
\]
Then $\langle h_j,h_{j+n}\rangle=\langle T^{p(n_0+j)}h,T^{p(n_0+j+n)}h\rangle=\langle h,T^{p(n_0+j+n)-p(n_0+j)}h\rangle$.
Now the polynomial $\overline{p}_n\in\mc{P}$ defined by
\[
\overline{p}_n(X):=p(n_0+X+n)-p(n_0+X)
\]
has degree $\deg p-1=d$, hence the sequence $\left(T^{\overline{p}_n(j)}h\right)_{n\in\mb{N}^+}$ is almost weakly stable. But this is by Lemma \ref{lemma:K-vN} equivalent to
\[
\lim_{N\to\infty}\frac{1}{N}\sum_{j=1}^N\left|\langle g,T^{\overline{p}_n(j)}h\rangle\right|=0 \mbox{ for all } g\in H.
\]
Applying this to the case $g=h$ and writing
\[
\widetilde{\gamma}_n:=\limsup_{N\to\infty}\frac{1}{N}\sum_{j=1}^N\left|\langle h_j,h_{j+n}\rangle\right|
\]
 we thus obtain $\widetilde{\gamma}_n=0$ for all $n\in\mb{N}^+$. By Lemma \ref{Le:weak} the sequence $(h_k)_{k\in\mb{N}^+}$ is then almost weakly stable. Since adding finitely many elements to a sequence does not influence its almost weak stability, $\left(T^{p(n)}h\right)_{n\in\mb{N}^+}$ is itself also almost weakly stable.

\end{proof}

\begin{R}\label{R:separable}
By a diagonal argument, it can be shown that if $H$ is separable and $T$ is almost weakly polynomial stable, then there exists a sequence $\{n_j\}_{j\in\mb{N}^+}$ of density $1$ such that $\lim_{j\to\infty} T^{p(n_j)}h=0$ weakly for every $h\in H$, i.e. the polynomial powers of $T$ themselves converge to zero in the weak operator topology along a sequence of density 1.\\
Essentially, for each $g_n$ of the countable separating set in $H$ one passes to a further subsequence of density $1$ such that we have weak convergence along it for each $g_k$, $k\leq n$.
The technical difficulty here is to ensure that after thinning out the original sequence a countable number of times, we still end up with a sequence of the required density $1$.
 For more details on how to obtain such an appropriate subsequence, we refer to Petersen \cite[Remark 2.6.3]{petersen:1983} or Niculescu, Str\"oh, Zsid\'o \cite[Lemma 9.1]{niculescu/stroh/zsido}.
\end{R}

In the following we give an example of an almost weakly stable contraction on a Banach space that is not almost weakly polynomial stable. Thus Theorem \ref{T:uaws_pol} cannot be generalised to arbitrary Banach spaces.

\begin{Ex}\label{Ex:C_0}

This example is based on Example $4.3$ in \cite{EFNS}.
We shall first define a continuous flow $\ph$ on $\Gamma$ and a single curve $\gamma$ in the interior of the unit disk $\mb{D}$.

Let $1$ be a fixed point of the flow, and let the flow on $\Gamma\backslash\{1\}$ be given as the homoclinic orbit of $-1$ in the following way.

\[
\ph_t(-1):=
\left
\{\begin{array}{lcr}
e^{\frac{\pi}{t+1} i }&\mbox{ if }& t\geq0\\
e^{\frac{\pi}{t-1} i}&\mbox{ if }&t\leq0.
\end{array}
\right.
\]

On the curve in the interior of $\mb{D}$, the flow is given by the parametrization of the curve, i.e. $\ph_t(\gamma(s)):=\gamma(s+t)$ for all $s,t\in\mb{R}$.
Let the curve $\gamma(t):=r(t)e^{\omega(t) i}$ be given by

\[
r(t):=\left\{
\begin{array}{lcr}
1-\frac{1}{2t}&\mbox{ if }&t\geq1\\
\frac{e^{t-1}}{2}&\mbox{ if }&t\leq1
\end{array}\right.
\]
and 
\[
\omega(t):=\left\{
\begin{array}{lcr}
-2k\pi-\frac{\pi}{2k^2+2-t}&\mbox{ if } &2k^2-k+2\leq t\leq 2k^2+1\\
-(2k+2)\pi+\frac{\pi}{t-2k^2}&\mbox{ if }&2k^2\leq t\leq 2k^2+k+1\\
-2k\pi+\frac{2k^2-2k+2-t}{k^2}\pi&\mbox{ if }&2k^2-3k+2\leq t\leq 2k^2-k+2\\
-4\pi+\frac{\pi}{t-2}&\mbox{ if }&3\leq t\leq4\\
-t\pi&\mbox{ if }&t\leq3
\end{array}\right.,
\]
where $k$ denotes an arbitrary integer.
Note that the curve is actually obtained as follows. For $t\leq1$, it spirals outwards from $0$ with constant angular speed. From $t=1$ onwards, on its $k$-th round around $0$, it follows radially the same angular speed as the homoclinic orbit $\Gamma\backslash\{1\}$ for angles outside of $(-\pi/k,\pi/k) \mod 2\pi$, and constant angular speed $\pi/k^2$ for angles within that interval. Therefore the flow on $\mf{S}:=\Gamma\cup\{\gamma(t)|t\in\mb{R}\}$ is continuous.

The flow can then be continuously extended to the whole of $\mb{D}\backslash\{0\}$ by piecewise linearization along rays starting at $0$, and continuous extension to $\mb{C}\backslash\mb{D}$ is also easily feasible. Thus the conditions of Example $4.3$ in \cite{EFNS} are fulfilled, and we can apply the results obtained therein.

The induced semigroup $(T(t))_{t\geq0}$ on $C(\mf{S})$ defined by
\[
(T(t)f)(x):=f(\ph_t(x)),\en f\in C(\mf{S}),\,x\in\mf{S}
\]
is then strongly continuous, isometric and weakly relatively compact. Let $(T_0(t))_{t\geq0}$ be its restriction to $C_0(\mf{S}\backslash\{1\})\cong\left\{f\in C(\mf{S})\left|f(1)=0\right.\right\}$.
Consider the discrete semigroup generated by $T_0(1)$. Since $T_0(1)$ has no unimodular eigenvalues, this semigroup is almost weakly stable by Theorem $II.4.1$ in \cite{eisner-book}.
But it can be checked that $\omega(2n^2-4n+3)=-(2n-1)\pi$ for positive integer values of $n$, hence $\lim_{n\to\infty} \gamma(2n^2-4n+3)=-1$. This implies that 
\[
\lim_{n\to\infty}
\langle T_0(1)^{2n^2-4n+3}f, \delta_{\gamma(0)}\rangle=
\lim_{n\to\infty} f(\gamma(2n^2-4n+3))=f(-1),
\]
and so the semigroup does not converge weakly to zero along the polynomial $p(X)=2X^2-4X+3$. Thus the operator $T_0(1)$ is almost weakly, but not almost weakly polynomial stable.
\end{Ex}

%%%%%%%%%%%%%%%%%%%%%%%%%%%
%
%
%
%
%
%%%%%%%%%%%%%%%%%%%%%%%%%%%

\section{Polynomial multiple ergodic averages}

In this section we apply the previous results to the setting of polynomial multiple ergodic averages, much in the vein of Eisner, Kunszenti-Kov\'acs \cite{EKK}.

We first introduce what we mean by a non-commutative dynamical system
and recall two convergence notions on von Neumann algebras. 

\begin{D}
A von Neumann (or non-commutative) dynamical system is a triple
$(\mk{A}, \ph, \beta)$, where $\mk{A}$ is a von Neumann algebra,
$\ph:\mk{A}\to \mb{C}$ is a faithful normal trace, and    
$\beta:\mk{A}\to \mk{A}$ is a $\ph$-preserving $*$-automorphism.
We say that a sequence $(b_n)_{n\in\mb{N}^+}$ in $\mk{A}$ converges \emph{strongly} if it converges in the 
$\ph$-norm $\|b\|_\ph:=\sqrt{\ph(b^*b)}$. It is said to be \emph{weakly} convergent if
\[
\ph(a_0b_n)
\]
converges as $N\to\infty$ for every $a_0\in \mk{A}$. 
\end{D}

Non-commutative dynamical systems and and their convergence properties have received much attention and were studied amongst others by Niculescu, Str\"oh and Zsid\'o \cite{niculescu/stroh/zsido}, Duvenhage \cite{duvenhage}, Beyers, Duvenhage and Str\"oh \cite{beyers/duvenhage/stroh}, Fidaleo \cite{fidaleo:2009}, and Austin, Eisner, Tao \cite{TTT}.

The last mentioned work, \cite{TTT}, studied the question of convergence of the multiple ergodic averages
\[
\frac{1}{N}\sum_{n=1}^N \beta^{n}(a_1)\beta^{2n}(a_2)\cdots\beta^{kn}(a_k)
\]
depending on $k\in\mb{N}^+$, showing that in contrast to the commutative case, one cannot expect convergence in general if $k\geq 3$ . On the other hand it is shown in Section 4 of \cite{EKK} that for every von Neumann dynamical system there is a large class (see below) $\mc{K}$ depending on the system such that the multiple ergodic averages converge strongly whenever
$a_1,\ldots,a_k\in \mc{K}$.
We wish to extend the latter result to multiple averages involving polynomial powers of the *-automorphism $\beta$.

More precisely, let $r,k\in\mb{N}^+$ with $r\leq k$, $\alpha:\{1,\ldots,k\}\rightarrow\{1,\ldots,r\}$ be a surjective mapping and $p_1,p_2,\ldots,p_r\in\mc{P}$. We shall be interested in the convergence of the expression
\begin{equation}\label{eq:mult-erg-ave}
\frac{1}{N^r} \sum_{n_1,\ldots,n_r=1}^N \beta^{s_1}(a_1)\beta^{s_2}(a_2)\cdots\beta^{s_k}(a_k)
\end{equation}
where $s_l:=\sum_{d=1}^l p_{\alpha(d)}(n_{\alpha(d)})$ for each $1\leq l\leq k$. Note that with the choice of $r=1$ and $p_1(n):=n$ we obtain the above mentioned linear case studied in \cite{TTT}.

We recall that by the Gel'fand--Neumark--Segal theory, $\mk{A}$ can be
identified with a dense subspace of a Hilbert space, where the Hilbert
space can be obtained as the completion of $\mk{A}$ with respect to the
$\ph$-norm. Thus, identifying elements of $\mk{A}$ with elements in $H$
and by the standard density argument, strong convergence of the multiple ergodic averages
(\ref{eq:mult-erg-ave}) corresponds to norm convergence in $H$ and
weak convergence of (\ref{eq:mult-erg-ave}) corresponds
to weak convergence in $H$. 

Recall further that for the automorphism
$\beta$ there exists a unitary operator $u\in\mc{L}(H)$ such that
$\beta(a)=uau^{-1}$, see e.g. \cite[Prop. 4.5.3]{kadison/ringrose}. Note that $u$ does not
necessarily belong to $\mk{A}$, and in this context the class $\mc{K}$ mentioned above can be chosen as the subspace of all elements $a\in \mk{A}$ such that $\{a u^n: n\in \mb{N}_0\}$ is relatively compact in $\mc{L}(H)$ for the strong operator topology. This class $\mc{K}$ then in particular contains all compact operators in $\mk{A}$.

Thus, averages (\ref{eq:mult-erg-ave}) take the form 
\begin{equation}\label{eq:ent-ave-for-mult}
\frac{1}{N^r}\sum_{n_1,\ldots,n_r=1}^N u^{p_{\alpha(1)}(n_{\alpha(1)})}a_1u^{p_{\alpha(2)}(n_{\alpha(2)})}a_2\cdots u^{p_{\alpha(k)}(n_{\alpha(k)})} a_k u^{-s_k}.
\end{equation}

It is well-known that strong (weak) topology and strong (weak)
operator topology coincide on every bounded subset of $\mk{A}$. 
Therefore, there is a direct correspondence between strong (weak) convergence of the polynomial
multiple ergodic averages (\ref{eq:mult-erg-ave}) and strong (weak)
\emph{operator} convergence of the polynomial entangled ergodic averages
(\ref{eq:ent-ave-for-mult}).

\begin{A}\label{prop:entangled-conv-equiv}
Let $(\mk{A}, \ph, \beta)$ be a von Neumann dynamical system and $H$ and $u$ as
above. Let further $a_1,\ldots,a_k\in \mk{A}$. Then the multiple ergodic averages
(\ref{eq:mult-erg-ave}) converge strongly (weakly) if and only if the
entangled averages (\ref{eq:ent-ave-for-mult}) converge  
in the strong (weak) operator topology. 
\end{A}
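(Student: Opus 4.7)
The plan is to reduce the proposition to two independent facts: (i) the averages (\ref{eq:mult-erg-ave}) and (\ref{eq:ent-ave-for-mult}) are literally the same object in $\mk{A}$, and (ii) on bounded subsets of $\mk{A}$, the strong/weak topologies coincide with the strong/weak operator topologies via the GNS embedding. Once these are in place the equivalence is immediate.

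For (i), I would use $\beta^m(a)=u^m a u^{-m}$ and telescope. Since $s_l-s_{l-1}=p_{\alpha(l)}(n_{\alpha(l)})$ (with $s_0:=0$), a direct calculation gives
\[
\beta^{s_1}(a_1)\cdots\beta^{s_k}(a_k) = u^{s_1}a_1 u^{s_2-s_1}a_2 u^{s_3-s_2}\cdots u^{s_k-s_{k-1}}a_k u^{-s_k},
\]
which is exactly the summand of (\ref{eq:ent-ave-for-mult}). So after averaging, the two expressions denote the same element $A_N\in\mk{A}$, viewed equivalently as a left-multiplication operator on $H$ via GNS.

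For (ii), I would first check uniform boundedness of the averages. As $\beta$ is a $*$-automorphism it is isometric, so $\|\beta^{s_l}(a_l)\|=\|a_l\|$, and each summand has operator norm at most $\prod_l\|a_l\|$; the same bound passes to the Ces\`aro mean. Then I would invoke the fact recalled in the excerpt just before the proposition: strong (resp.\ weak) topology on $\mk{A}$ agrees with the strong (resp.\ weak) operator topology on every bounded subset. Concretely, strong convergence $\|A_N-A\|_\ph\to 0$ is the same as $\|(A_N-A)\mathbf{1}\|_H\to 0$, which on bounded sets is equivalent to strong operator convergence (using that $\mk{A}\cdot\mathbf{1}$ is dense in $H$ together with the uniform bound). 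Likewise $\ph(a_0 A_N)\to\ph(a_0 A)$ for every $a_0\in\mk{A}$ translates to $\langle A_N\mathbf{1},a_0^*\mathbf{1}\rangle\to\langle A\mathbf{1},a_0^*\mathbf{1}\rangle$, and a density/boundedness argument upgrades this to weak operator convergence on $H$.

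The only mildly delicate step is the density/boundedness argument in the weak case, but this is entirely standard for GNS-type embeddings of von Neumann algebras and involves no new ideas beyond the cyclicity of $\mathbf{1}$. All the genuinely new content for this section lies in the rewriting (i); the rest of the proposition is a packaging statement that lets subsequent results about entangled operator averages on $H$ be applied directly to multiple ergodic averages in $\mk{A}$.
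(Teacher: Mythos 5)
Your proposal is correct and takes essentially the same route as the paper: the paper's (implicit) proof likewise uses $\beta^{s_l}(a_l)=u^{s_l}a_lu^{-s_l}$ to identify the summands of (\ref{eq:mult-erg-ave}) and (\ref{eq:ent-ave-for-mult}) as the same element of $\mk{A}$, and then invokes the well-known coincidence of the strong (weak) topology with the strong (weak) operator topology on bounded subsets of $\mk{A}$. Your telescoping computation via $s_l-s_{l-1}=p_{\alpha(l)}(n_{\alpha(l)})$ and the uniform bound $\prod_l\|a_l\|$ merely make explicit what the paper leaves to the remarks preceding the proposition.
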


We now show that under certain compactness asssumptions, the averages  (\ref{eq:ent-ave-for-mult}) converge in the strong operator topology. This is a generalization of the results in Eisner, K-K \cite{EKK}

\begin{A}
Let $H$ be a Hilbert space, $U\in\mc{L}(H)$ a unitary operator, $p_1,\ldots,p_r\in\mc{P}$ and $\alpha:\{1,\ldots,k\}\rightarrow\{1,\ldots,r\}$ a surjective mapping. Let further $A_1,\ldots,A_k\in \mc{L}(H)$ be such that $\{A_k U^{-n}: n\in \mb{N}^+\}$ and $\{A_j U^n: n\in \mb{N}^+\}$ are relatively compact in $\mc{L}(H)$ for the strong operator topology for every $1\leq j\leq k-1$. Then the polynomial entangled ergodic averages
 \begin{equation}\label{eq:ent-ave-for-mult-poly}
\frac{1}{N^r}\sum_{n_1,\ldots,n_r=1}^{N} U^{p_{\alpha(1)}(n_{\alpha(1)})}A_1U^{p_{\alpha(2)}(n_{\alpha(2)})}A_2\cdots U^{p_{\alpha(k)}(n_{\alpha(k)})} A_k U^{-\sum_{j=1}^k p_{\alpha(j)}(n_{\alpha(j)})}
\end{equation}
converge in the strong operator topology.
\end{A}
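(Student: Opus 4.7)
The plan is to adapt the strategy used in \cite{EKK} for the linear entangled averages, replacing the use of almost weak stability along linear sequences by the polynomial almost weak stability of Theorem \ref{T:uaws_pol}. A constant polynomial $p_i$ contributes a fixed operator together with a trivial summation over the variable $n_{\alpha(j)}$ that can be pulled outside the mean to reduce $r$ by one, so I may assume from the outset that all $p_i$ are non-constant.

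The first step is the telescoping identity
\[
U^{p_{\alpha(1)}(n_{\alpha(1)})} A_1 \cdots U^{p_{\alpha(k)}(n_{\alpha(k)})} A_k U^{-s_k(\vec n)} = \prod_{l=1}^{k} \bigl(U^{s_l(\vec n)} A_l U^{-s_l(\vec n)}\bigr),
\]
where $s_l(\vec n) := \sum_{d=1}^{l} p_{\alpha(d)}(n_{\alpha(d)})$, which rewrites (\ref{eq:ent-ave-for-mult-poly}) as a multiple Ces\`aro mean of products of conjugates $U^{s_l} A_l U^{-s_l}$. The SOT-compactness hypothesis on $\{A_j U^n\}$ and $\{A_k U^{-n}\}$ guarantees that on any fixed $h\in H$ the relevant intermediate orbits lie in norm-compact subsets of $H$, so any weak convergence statement for the averages will automatically upgrade to convergence in the strong operator topology.

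Next I would apply the Jacobs--de Leeuw--Glicksberg decomposition $H = H_r \oplus H_s$ from Proposition \ref{prop:JGdL_Hilbert} and split the operators $A_l$ and the test vectors accordingly. On $H_r$ each $U^{s_l(\vec n)}$ acts as the scalar $\lambda^{s_l(\vec n)}$ on the $\lambda$-eigenspace, so after using SOT-compactness to approximate each $A_l$ by finite-rank combinations of rank-one operators $h\otimes e_{\mu}$ with $Ue_{\mu}=\mu e_{\mu}$, one is reduced to verifying convergence of scalar polynomial-exponential Ces\`aro sums of the form $\tfrac{1}{N^r}\sum_{\vec n}\prod_{l}\mu_l^{s_l(\vec n)}$, which follows from classical Weyl equidistribution for polynomials in several variables. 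For the pieces in which some intermediate vector lies in $H_s$, Theorem \ref{T:uaws_pol} combined with Lemma \ref{lemma:K-vN} produces a density-one set of indices in the active variable along which the corresponding factor $U^{s_l(\cdot)}$ converges weakly to zero; the SOT-compactness of the $A_l$ then upgrades this to norm convergence to zero of that slice of the average.

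I expect the main obstacle to lie in the inductive bookkeeping. With $r$ independent summation variables and polynomials $s_l$ jointly depending on several of them, Theorem \ref{T:uaws_pol} must be applied one variable at a time while the others are frozen. This is possible since, after the reduction above and by the surjectivity of $\alpha$, for each fixed choice of the remaining variables the polynomial $s_l$ in the active variable $n_{\alpha(l)}$ is still non-constant, so Theorem \ref{T:uaws_pol} applies slot by slot; a standard $3\varepsilon$ argument combined with the diagonal construction of Remark \ref{R:separable} then pastes the pieces together and yields convergence of (\ref{eq:ent-ave-for-mult-poly}) in the strong operator topology.
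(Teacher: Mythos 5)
Your overall skeleton matches the paper's: the splitting $H=H_\mr{r}\oplus H_\mr{s}$ via Proposition \ref{prop:JGdL_Hilbert}, and your stable-part argument --- Theorem \ref{T:uaws_pol} upgraded through Remark \ref{R:separable} on a separable $U$-invariant subspace, Lemma \ref{lemma:K-vN}, and the passage from weak to norm convergence using relative norm-compactness of sets of the form $\{A_kU^{-n}x : n\in\mb{N}^+\}$ --- is essentially what the paper does. The genuine gap is in your treatment of $H_\mr{r}$. You propose to ``approximate each $A_l$ by finite-rank combinations of rank-one operators $h\otimes e_\mu$ with $Ue_\mu=\mu e_\mu$'' and thereby reduce the whole average to scalar Weyl sums. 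No hypothesis supports this: the $A_l$ are arbitrary bounded operators (for $U=\mathrm{Id}$ the compactness hypotheses are vacuous, so every bounded $A_l$ is admissible, and in particular non-compact ones), and, decisively, $A_l$ does not leave $H_\mr{r}$ invariant --- it maps eigenvectors to vectors with nontrivial stable components. Hence no approximation of the \emph{operators} can confine the computation to the reversible part; the stable components emitted at each slot must be fed back into the stable-part machinery, and this interleaving is exactly the induction on $k$ from the proof of Theorem 3 in \cite{EKK}, which your sketch invokes only as ``inductive bookkeeping'' without setting it up.

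Relatedly, your telescoping identity, though correct, does not perform the one reduction that is genuinely new in this proposition. The feature not covered by \cite{EKK} is the trailing exponent $-s_k$, a sum of polynomials in \emph{several} variables; after telescoping, every factor $U^{s_l}A_lU^{-s_l}$ still carries the multi-variable exponent $s_l$, so the difficulty has been relocated, not removed, and Theorem \ref{T:uaws_pol} (a statement about a single polynomial in one variable) does not apply ``slot by slot'' as claimed without further argument. The paper's extra step is the following: by the standard density argument one may take the test vector $x\in H_\mr{r}$ to be an eigenvector, $Ux=\lambda x$ with $\lambda\in\Gamma$; then $U^{-s_k}x=\lambda^{-s_k}x$, and distributing the scalar turns the average into one in the operator $\ol{\lambda}U$ in which each slot carries the single polynomial $p_{\alpha(l)}(n_{\alpha(l)})$, so the induction of \cite{EKK} applies verbatim. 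Your (correct) observation that each $s_l$ is non-constant in its active variable, and your reduction to non-constant $p_i$, are fine as far as they go, but they do not substitute for this eigenvalue-absorption step, without which your reversible-part argument does not close.
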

\begin{proof}
The proof is based on induction, and is in essence a polynomial version of that of Theorem 3 in \cite{EKK}, and for detailed arguments we refer to the proof given there.
The polynomial versions of the required lemmas have been proven for the Hilbert space case in Section \ref{Sect:awps}.
The only significant difference is that an extra step is needed here to set up the induction, as the last power in the averages considered is a sum of polynomials rather than a single polynomial.

Since $U$ is unitary, it induces a Jacobs-deLeeuw-Glicksberg decomposition of $H$ into the orthogonal sum $H_\mr{s}\oplus H_\mr{r}$, cf. Proposition \ref{prop:JGdL_Hilbert}. By linearity it is enough to show that the averages applied to any $x\in H_\mr{s}$ and $x\in H_\mr{r}$ converge.

Let us first assume that $x\in H_\mr{s}$. We wish to show that
\[
\frac{1}{N^r}\sum_{n_1,\ldots,n_r=1}^{N} U^{p_{\alpha(1)}(n_{\alpha(1)})}A_1U^{p_{\alpha(2)}(n_{\alpha(2)})}A_2\cdots U^{p_{\alpha(k)}(n_{\alpha(k)})} A_k U^{-\sum_{j=1}^k p_{\alpha(j)}(n_{\alpha(j)})}x
\]
converges to $0$ in norm. To this end note that by assumption, $L:=\{A_k U^{-n}x: n\in \mb{N}^+\}$ is relatively norm-compact. Denote its closure by $K$.
We shall need that the dual space of the smallest $U$-invariant subspace $Y$
containing $K$ is separable. Indeed, as $Y$ is a Hilbert space, this is equivalent to $Y$ itself being separable, which follows from the countability of the generating set $L$.
Therefore one may by Remark \ref{R:separable} find a sequence $\left(t_j\right)_{j\in\mb{N}^+}$ of density 1 such that $\lim_{j\to\infty} A_k U^{-\left|\{\alpha^{-1}(1)\}\right|\cdot p_{1}(t_j)}y =0$ for any $y\in Y$. By compactness this convergence is actually uniform on $K$. Since $ U^{p_{\alpha(1)}(n_{\alpha(1)})}A_1U^{p_{\alpha(2)}(n_{\alpha(2)})}A_2\cdots U^{p_{\alpha(k)}(n_{\alpha(k)})}$ is uniformly bounded and $ A_k U^{-\sum_{j=1}^k p_{\alpha(j)}(n_{\alpha(j)})}x$ can be rewritten as
\[
y_{n_1}:= A_k U^{-\left|\alpha^{-1}(1)\right|\cdot p_{1}(n_1)}\left(U^{-\sum_{j=2}^r \left|\alpha^{-1}(j)\right|\cdot  p_{j}(n_{j})}x\right),
\]
the norm convergence of the means follows from Lemma \ref{lemma:K-vN}.

Let now $x\in H_\mr{r}$. By uniform boundedness of the operator products involved, one may by the standard density argument assume that $x$ is an eigenvector to some unimodular eigenvalue $\lambda\in\Gamma$. Then
\begin{eqnarray*}
&&\frac{1}{N^r}\sum_{n_1,\ldots,n_r=1}^{N} U^{p_{\alpha(1)}(n_{\alpha(1)})}A_1U^{p_{\alpha(2)}(n_{\alpha(2)})}A_2\cdots U^{p_{\alpha(k)}(n_{\alpha(k)})} A_k U^{-\sum_{j=1}^k p_{\alpha(j)}(n_{\alpha(j)})}x\\
&=&\frac{1}{N^r}\sum_{n_1,\ldots,n_r=1}^{N} U^{p_{\alpha(1)}(n_{\alpha(1)})}A_1U^{p_{\alpha(2)}(n_{\alpha(2)})}A_2\cdots U^{p_{\alpha(k)}(n_{\alpha(k)})} A_k \lambda^{-\sum_{j=1}^k p_{\alpha(j)}(n_{\alpha(j)})}x\\
&=&\frac{1}{N^r}\sum_{n_1,\ldots,n_r=1}^{N} \left(\ol{\lambda}U\right)^{p_{\alpha(1)}(n_{\alpha(1)})}A_1 \left(\ol{\lambda}U\right)^{p_{\alpha(2)}(n_{\alpha(2)})}A_2\cdots \left(\ol{\lambda}U\right)^{p_{\alpha(k)}(n_{\alpha(k)})} \left(A_k x\right).
\end{eqnarray*}

This is now a form where each power is a single polynomial, and the induction arguments from the proof of Theorem 3 in \cite{EKK} can be applied to show convergence.
\end{proof}

We can thus conclude the following for polynomial dynamical systems.

\begin{Cor}
Let $(\mk{A}, \ph, \beta)$ be a von Neumann dynamical system, with unitary representation $u\cdot u^{-1}$ of $\beta$ on the GNS-space $H$ pertaining to $\ph$.
Let further $r,k\in\mb{N}^+$ with $r\leq k$, $\alpha:\{1,\ldots,k\}\rightarrow\{1,\ldots,r\}$ be a surjective mapping, $p_1,p_2,\ldots,p_r\in\mc{P}$ and let  $s_l:=\sum_{d=1}^l p_{\alpha(d)}(n_{\alpha(d)})$ for each $1\leq l\leq k$.
Assume now that $a_1,\ldots,a_k\in \mk{A}$ are such that $\{a_k u^{-n}: n\in \mb{N}^+\}$ and $\{a_j u^n: n\in \mb{N}^+\}$ are relatively compact in $\mc{L}(H)$ for the strong operator topology for every $1\leq j\leq k-1$. Then the polynomial multiple averages
\[
\frac{1}{N^r} \sum_{n_1,\ldots,n_r=1}^N \beta^{s_1}(a_1)\beta^{s_2}(a_2)\cdots\beta^{s_k}(a_k)
\]
converge strongly.
\end{Cor}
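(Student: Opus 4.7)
The plan is to obtain this corollary as a direct combination of the two immediately preceding results on the Hilbert space side. Given a von Neumann dynamical system $(\mk{A},\ph,\beta)$, I would first pass to the GNS representation: identify $\mk{A}$ with a dense subspace of the Hilbert space $H$ obtained by completing $\mk{A}$ in the $\ph$-norm, and take $u\in\mc{L}(H)$ unitary with $\beta(\cdot)=u\cdot u^{-1}$. Under this identification the multiple ergodic averages (\ref{eq:mult-erg-ave}), viewed as elements of $\mk{A}\subset H$, rewrite exactly as the entangled averages (\ref{eq:ent-ave-for-mult}) applied to the cyclic vector, which in turn is the shape of (\ref{eq:ent-ave-for-mult-poly}) with $U:=u$ and $A_j:=a_j$ regarded as bounded operators on $H$.

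Next I would apply Proposition \ref{prop:entangled-conv-equiv}, which asserts that strong convergence in $\mk{A}$ of the multiple averages (\ref{eq:mult-erg-ave}) is equivalent to strong operator convergence of the entangled averages (\ref{eq:ent-ave-for-mult}). It therefore suffices to prove the latter. This is where the preceding Proposition comes in: its hypothesis requires $\{A_k U^{-n}:n\in\mb{N}^+\}$ and $\{A_j U^n:n\in\mb{N}^+\}$ to be relatively compact in $\mc{L}(H)$ for the strong operator topology, which under our identifications is verbatim the compactness assumption made on the $a_j$ and $a_k$ in the corollary. Hence the preceding Proposition applies and yields strong operator convergence of the entangled averages (\ref{eq:ent-ave-for-mult-poly}); transporting back through Proposition \ref{prop:entangled-conv-equiv} gives the desired strong convergence of the polynomial multiple ergodic averages.

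There is no real obstacle at this stage since the hard analytic work—deploying Theorem \ref{T:uaws_pol} together with the separability refinement of Remark \ref{R:separable} to obtain a single density-$1$ subsequence along which $A_k U^{-|\alpha^{-1}(1)|\,p_1(n)}y$ vanishes uniformly on the compact orbit closure, and then running the induction on the number of distinct variables $r$—is already carried out in the proof of the preceding Proposition. The only points requiring care in the present step are the bookkeeping of the GNS identifications (so that the compactness hypotheses and the form of the averages match) and the observation that the strong/weak topologies on bounded subsets of $\mk{A}$ coincide with the corresponding operator topologies, both of which have been flagged in the text preceding Proposition \ref{prop:entangled-conv-equiv}.
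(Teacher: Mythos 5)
Your proposal is correct and follows exactly the route the paper intends: the corollary is obtained by rewriting the multiple averages in the entangled form via the GNS representation, invoking Proposition \ref{prop:entangled-conv-equiv} for the equivalence of the convergence notions, and then applying the preceding Proposition on polynomial entangled ergodic averages, whose compactness hypotheses match the assumptions on the $a_j$ verbatim. The paper gives no separate proof precisely because, as you note, all the analytic work is in the entangled-averages Proposition, and your bookkeeping of the identifications (including the coincidence of strong/weak and operator topologies on bounded sets) is the whole content of the deduction.
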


\bibliographystyle{amsplain}

\begin{thebibliography}{10}

\bibitem{TTT}
T.~Austin, T.~Eisner, T.~Tao,
Nonconventional ergodic averages and multiple recurrence
for von Neumann dynamical systems, \textit{Pacific J. Math.} \textbf{250} (2011), 1--60.
%
\bibitem{BLRT}
D.~Berend, M.~Lin, J.~Rosenblatt, A.~Tempelman, %
Modulated and subsequential ergodic theorems in Hilbert and Banach spaces, 
\textit{Ergodic Theory Dynam. Systems} \textbf{22} (2002), 1653--1665.
%
\bibitem{Bergelson1987}
V.~Bergelson,
Weakly mixing PET,
\textit{Ergodic Theroy Dynam. Systems} \textbf{7} (1987), 337--349.
%
\bibitem{beyers/duvenhage/stroh}
C.~Beyers, R.~Duvenhage, and A.~Str\"oh,
The Szemer\'edi property in ergodic W$^*$-dynamical
systems, %
\textit{J. Operator Theory} \textbf{64} (2010), 35--67.
%
\bibitem{dunford/schwartz:1958}
N.~Dunford, J.~T.~Schwartz, %
\textit{Linear {O}perators. I.}, %
Interscience Publishers, Inc., New York; Interscience Publishers, Ltd., London 1958. %
%
\bibitem{duvenhage}
R.~Duvenhage, %
Bergelson's theorem for weakly mixing $C^\ast$-dynamical systems, 
\textit{Studia Math.} \textbf{192} (2009), 235--257. 
%
\bibitem{eisner-book}
T.~Eisner, %
\textit{Stability of Operators and Operator Semigroups}, %
Operator Theory: Advances and Applications, 209, Birkh\"auser Verlag, Basel, 2010. 
%
\bibitem{EFNS}
T.~Eisner, B.~Farkas, R.~Nagel and A.~Ser\'eny,
Weakly and almost weakly stable $C_0$-semigroups,
\textit{Int. J. Dyn. Syst. Differ. Equ.} \textbf{1} (2007), 44--57.
%
\bibitem{EKK}
T.~Eisner and D. Kunszenti-Kov\'acs,
On the entangled ergodic theorem,
\textit{Ann. Sc. Norm. Super. Pisa Cl. Sci.}, \textbf{XII} (2013), 141--156.
%
\bibitem{fidaleo:2009}
F.~Fidaleo,
An ergodic theorem for quantum diagonal measures, 
\textit{Infin. Dimens. Anal. Quantum Probab. Relat. Top.} \textbf{12} (2009), 307--320. 
%
\bibitem{Foguel}
S.~R.~Foguel,
Powers of a contraction in Hilbert space,
\textit{Pacific J. Math.} \textbf{13} (1963), 551--562.
%
\bibitem{Furst}
H.~Furstenberg,
\textit{Recurrence in Ergodic Theory and Combinatorial Number Theory},
Princeton University Press, 1981.
%
\bibitem{jacobs57}
K.~Jacobs,
Fastperiodizit\"atseigenschaften allgemeiner Halbgruppen in Banach-R\"aumen,
\textit{Math. Z.} \textbf{67} (1957), 83--92.
%
\bibitem{kadison/ringrose}
R.~V.~Kadison, J.~R.~Ringrose,
\textit{Fundamentals of the {T}heory of {O}perator {A}lgebras, Vol. I.}
Academic Press, 1983.
%
\bibitem{Krengel}
U.~Krengel, %
\textit{Ergodic {T}heorems}, %
de Gruyter Studies in Mathematics, de Gruyter, Berlin, 1985. %
%
\bibitem{Langer}
H.~Langer,
Ein Zerspaltungssatz f\"ur Operatoren im Hilbertraum.
\textit{Acta Math. Hungar.} \textbf{12} (1961), 441--445.
%
\bibitem{niculescu/stroh/zsido}
C.~P.~Niculescu, A.~Str\"oh, L.~Zsid\'o, %
Noncommutative extensions of classical and multiple recurrence theorems,
\textit{J. Operator Theory} \textbf{50} (2003), 3--52. 
%
\bibitem{petersen:1983}
K.~Petersen, %
\textit{Ergodic {T}heory}, %
Cambridge Studies in Advanced Mathematics, Cambridge University Press, 1983. %
%
\bibitem{Nagy}
B.~Sz.-Nagy and C.~Foia\c{s},
Sur les contractions de l'espace de Hilbert, IV.
\textit{Acta Sci. Math. (Szeged)} \textbf{21} (1960), 251--259.
%
\bibitem{ZS}
L.~Zsid\'o,
Weak mixing properties of vector sequences,
 Dritschel, Michael A. (ed.), \textit{The extended field of operator theory}, 361--388,
Operator Theory: Advances and Applications, 171, Birkh\"auser Verlag, Basel, 2006.
%
\end{thebibliography}

\end{document}